\DeclareMathOperator{\Pd}{\mathbb{P}}
\DeclareMathOperator{\Ed}{\mathbb{E}}
\DeclareMathOperator{\un}{1\hspace{-.29em}I}
\newtheorem{lemma}{Lemma}
\newtheorem{proposition}{Proposition}
\newtheorem{remark}{Remark}[section]
\newenvironment{proof}[1][Proof]{\textbf{#1.} }{\ \rule{0.5em}{0.5em}}
\author{Yves Le Jan}
\title{Loop clusters on complete graphs } 
\begin{document}
\maketitle

\footnotetext{ Key words and phrases: Markov loops, complete graphs}
\footnotetext{  AMS 2010 subject classification:  60C05, 60J27, 60G60.}
\begin{abstract}
We investigate  random partitions of complete graphs defined by Poissonian emsembles of Markov loops. \\

\end{abstract}

\section{Loop clusters}

Let us first review briefly a few general results about loop clusters defined on a general graph $\mathcal{G}=(X,E)$, equiped with a set of conductances  $C_{x,y}$ and a nonnegative killing function $\kappa$. This review is based on references \cite{stfl}, \cite{ljlm} and \cite{book}. Our setup allows to define a transition matrix $P_{x,y}=\frac{C_{x,y}}{\lambda_x}$, with $\lambda_x=\sum_y C_{x,y}+\kappa_x$ and a measure $\nu$ on discrete oriented loops (see section 2.1 in \cite{book}): the measure of a loop is given by the product of the transition matrix values of his edges divided by its multiplicity. Then, one defines for each positive $\alpha$, a Poisson process $\mathcal{DL}_{\alpha}$ of intensity $\alpha\nu$. The size of a loop configuration (i.e. a multiset of discrete loops) is defined to be the sum the lengths of its loops.\\ Occupied edges define a partition $\mathcal{C}_{\alpha}$ of the graph into connected components, possibly including some isolated vertices.\\
Alternatively, the partition $\mathcal{C}_{\alpha}$ can be defined using a random set $\mathcal{PL}_{\alpha}$ of primitive discrete loops (a discrete loop $\xi$ is said to be \emph{primitive} if it does not have a non-trivial period). A primitive loop $\eta$ belongs to $\mathcal{PL}_{\alpha}$ with probability $1-(1-\nu(\eta_i))^{\alpha}$ ($=\nu(\eta_i)$ if $\alpha=1$) and this happens independently for all primitive loops. These two constructions are in fact simply related. Any discrete loop $\xi$ is a multiple of a primitive discrete loop which
we can denote by $\pi\xi$. Then, $\mathcal{PL}_{\alpha}$ can be identified with $ \pi(\mathcal{DL}_{\alpha})$  (See section 5.1 in \cite{book}).

If we assume that the graph $\mathcal{G}$ is finite, and that $\kappa$ does not vanish everywhere, the Green matrix $G$ is defined to be the inverse of $\lambda_x\delta_{x,y}-C_{x,y}$. Moreover, given any subset $D$ of $X$, the Green matrix $G^{D}$ is the inverse of the restriction of $\lambda_x\delta_{x,y}-C_{x,y}$ to $D\times D$ ($X$ is replaced by $D$ and $\kappa$ by $\kappa+\sum_{y\in X-D}C_{x,y}).$  

The probability that $\mathcal{C}_{\alpha}$ is thinner ($ \succeq$) than a given partition $\pi$  has a simple expression in terms of the Green matrices (See \cite{ljlm} or section 5.3 in \cite{book}): 
\begin{proposition}\label{semigrouppartition}

Given a partition $\pi=(B_i)_{i\in I}$ of $X$,
 \begin{equation}\label{semigroupdetG}
\Pd(\mathcal{C}_{\alpha}\succeq \pi)=\left(\frac{\prod_{i\in I}\det(G^{B_i})}{\det(G)}\right)^{\alpha}. 
\end{equation}
\end{proposition}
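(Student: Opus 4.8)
The plan is to read the event $\{\mathcal{C}_\alpha \succeq \pi\}$ as a statement about which loops appear in the Poissonian ensemble, and then to evaluate the resulting Poisson probability through the trace formula linking the loop measure to Green determinants. Since $\mathcal{C}_\alpha$ is the partition generated by the occupied edges, $\mathcal{C}_\alpha$ refines $\pi=(B_i)_{i\in I}$ if and only if no occupied edge joins two distinct blocks. Because every discrete loop is a closed path, a loop produces an edge between two blocks precisely when it visits more than one block; conversely a loop contained in a single $B_i$ occupies only edges inside that block. As the blocks partition $X$, the loops split into those supported by a single block $B_i$ and the \emph{crossing} loops that meet at least two blocks, and the event $\{\mathcal{C}_\alpha \succeq \pi\}$ is exactly the event that $\mathcal{DL}_\alpha$ contains no crossing loop.

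By the restriction property of Poisson processes, the number of crossing loops in $\mathcal{DL}_\alpha$ is Poisson distributed with parameter $\alpha\,\nu(\mathcal{L}_{\mathrm{cross}})$, where $\mathcal{L}_{\mathrm{cross}}$ denotes the set of loops meeting two or more blocks. Hence
\begin{equation}
\Pd(\mathcal{C}_\alpha \succeq \pi)=\exp\bigl(-\alpha\,\nu(\mathcal{L}_{\mathrm{cross}})\bigr),
\end{equation}
and it remains to compute $\nu(\mathcal{L}_{\mathrm{cross}})=\nu(\mathcal{L})-\sum_{i\in I}\nu(\mathcal{L}_{B_i})$, the difference between the mass of all loops and the mass of the loops confined to a single block (disjointness of the blocks makes the latter masses additive).

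The central computation is the trace identity. From the stated normalization $\nu(\ell)=m(\ell)^{-1}\prod_{\text{edges}}P$, summing over unrooted loops of length $p$ gives $\sum_{|\ell|=p}\nu(\ell)=\frac{1}{p}\,\mathrm{tr}(P^{p})$, so that $\nu(\mathcal{L})=\sum_{p\ge 1}\frac{1}{p}\,\mathrm{tr}(P^{p})=-\log\det(I-P)$. Applying the same identity to loops confined to $B_i$, whose $P$-weights are products of entries of the submatrix $P^{B_i}=(P_{x,y})_{x,y\in B_i}$, yields $\nu(\mathcal{L}_{B_i})=-\log\det(I-P^{B_i})$. Therefore
\begin{equation}
\Pd(\mathcal{C}_\alpha \succeq \pi)=\left(\frac{\det(I-P)}{\prod_{i\in I}\det(I-P^{B_i})}\right)^{\alpha}.
\end{equation}

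Finally I would translate the $(I-P)$ determinants into Green determinants. Writing $\lambda_x\delta_{x,y}-C_{x,y}=\mathrm{diag}(\lambda)(I-P)$ gives $G=(I-P)^{-1}\mathrm{diag}(\lambda)^{-1}$, hence $\det(I-P)=(\det G\prod_x\lambda_x)^{-1}$; the restriction of $\lambda_x\delta_{x,y}-C_{x,y}$ to $B_i\times B_i$ equals $\mathrm{diag}(\lambda|_{B_i})(I-P^{B_i})$ by the very definition of $G^{B_i}$ (the diagonal still carries the full $\lambda_x$), so $\det(I-P^{B_i})=(\det G^{B_i}\prod_{x\in B_i}\lambda_x)^{-1}$. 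Substituting and using that $\prod_{i\in I}\prod_{x\in B_i}\lambda_x=\prod_{x\in X}\lambda_x$ because $\pi$ partitions $X$, all the $\lambda_x$ factors cancel and the claimed formula drops out. The step needing the most care is the topological one, namely verifying that refinement of $\pi$ is exactly the absence of crossing loops, together with checking that the submatrix $P^{B_i}$ is genuinely associated with the Green matrix $G^{B_i}$ as defined, i.e. that killing the walk on leaving $B_i$ leaves the diagonal weights $\lambda_x$ untouched; once these are in place the Poisson and determinant manipulations are routine.
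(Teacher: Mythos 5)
Your proof is correct, and it follows essentially the route of the paper's source: the paper itself states this proposition without proof, citing \cite{ljlm} and Section 5.3 of \cite{book}, and the argument given there is precisely your chain --- identify $\{\mathcal{C}_{\alpha}\succeq\pi\}$ with the Poisson void event for block-crossing loops, compute the loop-measure masses via $\sum_{|\ell|=p}\nu(\ell)=\frac{1}{p}\mathrm{tr}(P^{p})$ to get $\nu$ of loops confined to $D$ equal to $-\log\det(I-P^{D})$, and convert to Green determinants through $\lambda_x\delta_{x,y}-C_{x,y}=\diag(\lambda)(I-P)$, with the $\lambda_x$ factors cancelling across the blocks. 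Your attention to the key subtlety (that restricting $\lambda_x\delta_{x,y}-C_{x,y}$ to $B_i\times B_i$ keeps the full diagonal $\lambda_x$, i.e.\ corresponds to augmenting the killing by $\sum_{y\notin B_i}C_{x,y}$, exactly as in the paper's definition of $G^{B_i}$) is what makes the determinant bookkeeping come out right.
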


Using an inclusion exclusion argument, an explicit formula for $\Pd(\mathcal{C}_{\alpha}=\pi)$ is derived from this proposition in \cite{ljlm}.  Let us first introduce some notations. For  a partition $\pi$,  let $|\pi|$ denote the number of non-empty blocks of $\pi$. For a subset $A$ of $X$,  let $\pi_{|A}$ denote the restriction of $\pi$ to $A$: $\pi_{|A}$ is a partition of $A$, 
the blocks of which are the intersection of the blocks of $\pi$ with $A$. 
\begin{proposition}

Let $\pi$ be a partition of $X$ with $k$ non-empty blocks denoted by $B_1,\ldots,B_k$. Then
\begin{equation}
\label{sgr_alpha}
\Pd(\mathcal{C}_{\alpha}=\pi)=\sum_{\tilde{\pi}\succeq \pi}(-1)^{|\tilde{\pi}|-k}\prod_{i=1}^{k}(|\tilde{\pi}_{|B_i}|-1)!\Pd(\mathcal{C}_{\alpha}\succeq \tilde{\pi}).
\end{equation}
\end{proposition}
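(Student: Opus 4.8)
The plan is to read \eqref{sgr_alpha} as a Möbius inversion on the finite poset of partitions of $X$ ordered by $\succeq$, in which the quantities $\Pd(\mathcal{C}_\alpha\succeq\tilde\pi)$ are the zeta-transform of the unknown weights $\Pd(\mathcal{C}_\alpha=\tilde\pi)$. The only probabilistic input is an elementary decomposition of events: for every partition $\sigma$, the event $\{\mathcal{C}_\alpha\succeq\sigma\}$ is the disjoint union of the events $\{\mathcal{C}_\alpha=\tilde\pi\}$ over all $\tilde\pi\succeq\sigma$, since $\mathcal{C}_\alpha\succeq\sigma$ holds exactly when the realized partition is one of these refinements. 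First I would record the resulting identity
\[
\Pd(\mathcal{C}_\alpha\succeq\sigma)=\sum_{\tilde\pi\succeq\sigma}\Pd(\mathcal{C}_\alpha=\tilde\pi),
\]
valid for every $\sigma$, and then invert it on the partition lattice to obtain
\[
\Pd(\mathcal{C}_\alpha=\pi)=\sum_{\tilde\pi\succeq\pi}\mu(\pi,\tilde\pi)\,\Pd(\mathcal{C}_\alpha\succeq\tilde\pi),
\]
where $\mu$ is the Möbius function of this poset. This step uses nothing about the law of $\mathcal{C}_\alpha$; Proposition~\ref{semigrouppartition} only serves to make the factors $\Pd(\mathcal{C}_\alpha\succeq\tilde\pi)$ explicit.

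It then remains to identify $\mu(\pi,\tilde\pi)$ and to check that it equals $(-1)^{|\tilde\pi|-k}\prod_{i=1}^{k}(|\tilde\pi_{|B_i}|-1)!$. Since $\tilde\pi\succeq\pi$ means $\tilde\pi$ refines $\pi$, every block of $\tilde\pi$ sits inside a unique $B_i$, and a partition $\rho$ lies in the interval $[\pi,\tilde\pi]$ precisely when, within each $B_i$, it interpolates between the single block $\{B_i\}$ and $\tilde\pi_{|B_i}$. I would use this to factor the interval as a direct product
\[
[\pi,\tilde\pi]\;\cong\;\prod_{i=1}^{k}\Pi_{m_i},\qquad m_i:=|\tilde\pi_{|B_i}|,
\]
where $\Pi_m$ is the lattice of partitions of an $m$-element set (here the set of the $m_i$ blocks of $\tilde\pi$ contained in $B_i$). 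I would then invoke two standard facts: the Möbius function of a product poset is the product of the factor Möbius functions, and $\mu_{\Pi_m}(\hat 0,\hat 1)=(-1)^{m-1}(m-1)!$.

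Combining these, and noting that dualizing a bounded poset preserves its bottom-to-top Möbius number (so the orientation of $\succeq$ inside each factor is immaterial), I obtain
\[
\mu(\pi,\tilde\pi)=\prod_{i=1}^{k}(-1)^{m_i-1}(m_i-1)!=(-1)^{\sum_i(m_i-1)}\prod_{i=1}^{k}(m_i-1)!.
\]
Finally, since the blocks of $\tilde\pi$ are sorted according to which $B_i$ contains them, $\sum_{i=1}^{k}m_i=|\tilde\pi|$, whence $\sum_i(m_i-1)=|\tilde\pi|-k$; substituting reproduces exactly the coefficient in \eqref{sgr_alpha}. I expect the only real obstacle to be bookkeeping rather than analysis: justifying the product decomposition of $[\pi,\tilde\pi]$ and tracking the sign and the product of factorials through the factorization. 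All the genuine probabilistic content is concentrated in the single summation identity of the first paragraph.
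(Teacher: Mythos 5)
Your proof is correct and matches, in essence, the approach the paper takes: the paper offers no proof of its own, attributing the formula to an ``inclusion exclusion argument'' in \cite{ljlm}, and your M\"obius inversion on the partition lattice---the disjoint decomposition of $\{\mathcal{C}_{\alpha}\succeq\sigma\}$ into the events $\{\mathcal{C}_{\alpha}=\tilde{\pi}\}$, the interval factorization $[\tilde{\pi},\pi]\cong\prod_{i}\Pi_{m_i}$, and $\mu_{\Pi_m}(\hat{0},\hat{1})=(-1)^{m-1}(m-1)!$---is precisely that inclusion-exclusion argument carried out explicitly. The sign and factorial bookkeeping ($\sum_i m_i=|\tilde{\pi}|$, hence $\sum_i(m_i-1)=|\tilde{\pi}|-k$) is handled correctly, so nothing is missing.
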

\section{Clusters in the complete graph}
Let us consider the case of the complete graph $K_n$ endowed with unit conductances and a constant killing factor $\kappa$. The expression of the Green matrix $G$ is:
\[
\frac{1}{\kappa (n+\kappa)}(\kappa I+J)
\]
where $J$ denotes the $(n,n)$ matrix with all entries equal to $1$. Similarly, if $|D|=d$, the expression of the Green matrix $G^D$ is:
\[
\frac{1}{(n-d+\kappa) (n+\kappa)}((n-d+\kappa) I_D+J_D)
\]
where $I_D$ and $J_D$ denote the restrictions of the $(n,n)$ matrices $I$ and $J$ to $D\times D$.
Moreover, it is easy to check that for any $(m,m)$  square matrix $M$ with diagonal entries equal to $a+b$ and off diagonal entries equal to $b$, $\det(M)=a^{m-1}(a+mb)$. 
Hence we get the following:
 \begin{lemma}\label{lemdet}
$\det(G)=\frac{1}{\kappa (n+\kappa)^{n-1}}$ and $\det(G^D)$= $\frac{1}{(n-d+\kappa)(n+\kappa)^{d-1}}$. 
 \end{lemma}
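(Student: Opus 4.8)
The plan is to apply the general determinant formula stated just above the lemma, namely that an $(m,m)$ matrix with diagonal entries $a+b$ and off-diagonal entries $b$ has determinant $a^{m-1}(a+mb)$. Both Green matrices $G$ and $G^D$ have exactly this structure, so the work reduces to reading off the correct values of $a$, $b$, and $m$ in each case and then simplifying.

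First I would treat $G$. Writing $G=\frac{1}{\kappa(n+\kappa)}(\kappa I+J)$, the matrix $\kappa I + J$ has diagonal entries $\kappa+1$ and off-diagonal entries $1$, so in the notation of the cited formula we take $a=\kappa$, $b=1$, and $m=n$. Thus $\det(\kappa I+J)=\kappa^{n-1}(\kappa+n)$. Since $G$ is this matrix scaled by $\frac{1}{\kappa(n+\kappa)}$, and scaling an $(n,n)$ matrix by $c$ multiplies its determinant by $c^{n}$, I get
\[
\det(G)=\left(\frac{1}{\kappa(n+\kappa)}\right)^{n}\kappa^{n-1}(\kappa+n)=\frac{\kappa^{n-1}(n+\kappa)}{\kappa^{n}(n+\kappa)^{n}}=\frac{1}{\kappa(n+\kappa)^{n-1}},
\]
which is the first claimed value.

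Next I would do the same for $G^D$ with $|D|=d$. Here $G^D=\frac{1}{(n-d+\kappa)(n+\kappa)}((n-d+\kappa)I_D+J_D)$, and the inner $(d,d)$ matrix has diagonal entries $(n-d+\kappa)+1$ and off-diagonal entries $1$, so now $a=n-d+\kappa$, $b=1$, and $m=d$. Hence $\det((n-d+\kappa)I_D+J_D)=(n-d+\kappa)^{d-1}(n-d+\kappa+d)=(n-d+\kappa)^{d-1}(n+\kappa)$. Pulling out the scalar factor to the power $d$ and cancelling gives the second claimed value; the cancellation of the $(n-d+\kappa)$ powers is the only slightly fiddly bookkeeping step, but it is purely routine arithmetic.

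There is no genuine obstacle here: the lemma is a direct substitution into an already-established determinant identity, and the main point to get right is simply that rescaling a $d\times d$ (resp.\ $n\times n$) matrix by a scalar raises the determinant to the $d$th (resp.\ $n$th) power, so that the overall prefactors combine correctly. The two computations are independent and symmetric in form, so once the first is carried out the second follows by the obvious substitution $\kappa\mapsto n-d+\kappa$ and $n\mapsto d$ in the inner matrix.
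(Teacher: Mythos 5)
Your proof is correct and is essentially the paper's own argument: the paper derives the lemma by applying the stated identity $\det(aI+bJ)=a^{m-1}(a+mb)$ to the explicit expressions for $G$ and $G^D$, exactly as you do, with the scalar prefactor raised to the matrix dimension. Your write-up just makes explicit the bookkeeping (choice of $a$, $b$, $m$ and the cancellation of prefactors) that the paper leaves to the reader with its ``Hence we get the following.''
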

By formula (\ref{semigroupdetG}), if  $\pi$ is a partition of the set of vertices $X$ with $l$ blocks $B_1,\ldots,B_l$ then,  $\frac {\prod_i \det(G^{B_i})}{\det(G)}=(\frac{\kappa}{\kappa+n})\prod_{1\leq i\leq l}(1-\frac{|B_i|}{n+\kappa})^{-1}$ and
$$\Pd(\mathcal{C}_{\alpha}\succeq \pi)=(\frac{\kappa}{\kappa+n})^{\alpha}\prod_{i=1}^l(1-\frac{|B_i|}{n+\kappa})^{-\alpha}. $$
%
Let us note that 
 $(1-\frac{j}{n+\kappa})^{-\alpha}$ is the $j$-th moment $m_{j}^{(\alpha)}$ of the random variable 
$Y^{(\alpha)}=\exp(\frac{Z^{(\alpha)}}{n+\kappa})$ where $Z^{(\alpha)}$ denotes a Gamma$(\alpha,1)$-distributed
random variable. In particular, for $\alpha=1$,  $Y^{(1)}$ has density $(n+\kappa)y^{-(n+\kappa+1)}\un_{[1,\infty)}$. Thus, $$\displaystyle{\Pd(\mathcal{C}_{\alpha}\succeq\pi%
)=\frac{\prod_{i=1}^{l}m^{(\alpha)}_{\left|  B_{i}\right|  }}{m_n^{(\alpha)}}}
.$$  
Equivalently, if $D_1, ...,D_k$ are disjoint sets of vertices, with $|D_i|=d_i$, $$\Pd(D_1, ..., D_k \text{ are isolated})=
\Pd(\mathcal{C}_{\alpha}\succeq \{D_1, ...,D_k,X-\bigcup_1^kD_i\}%
)=\frac{\prod_{i=1}^{k} m^{(\alpha)}_{d_{i} }m_{n-\sum_1^k d_i}^{(\alpha)}}{m_n^{(\alpha)}}
.$$  

In particular, given any $k$-tuple of vertices $x_1, ..., x_k$,
$$\Pd(x_1, ..., x_k \text{ are isolated})= \frac{\kappa^{\alpha}}{(k+\kappa)^{\alpha}}(1-1/(n+\kappa))^{-k\alpha}.$$
Denoting by $I_1$ the set of isolated vertices, we can therefore compute the factorial moments of $|I_1|$ by considering among all $k$-samples without replacement, those whose elements are isolated:
\begin{equation} \label{anss}
\Ed(|I_1|(|I_1|-1)...(|I_1|-k+1))=n(n-1)...(n-k+1)\frac{\kappa^{\alpha}}{(k+\kappa)^{\alpha}}(1-1/(n+\kappa))^{-k\alpha}.
\end{equation}

Consider now isolated clusters of finite size $d$.\\

Let $c_n^{(\alpha)}$ denote the $n$-th cumulant of $Y^{(\alpha)}$.  Formula (\ref{sgr_alpha}) and the expression of cumulants  in terms of moments (see for example Section 3.3 in \cite{C}, or formula (1.30) in \cite{P}) yield that:
\begin{equation} \label{anis}
\Pd(\mathcal{C}_{\alpha}=\{X\})=\frac{c_{n}^{(\alpha)}}{m_{n}^{(\alpha)}}.
\end{equation}
Denoting by $I_d$ the set of $\mathcal{C}_{\alpha}$-clusters of size $d$, we can now compute the factorial moments of $|I_d|$, we have:
\begin{proposition}\label{fksh}
$$
\Ed(|I_d|...(|I_d|-k+1))= {n \choose d}{n-d \choose d}...{n-kd+d \choose d}[c_d^{(\alpha}]^k \left(\frac {\kappa}{(kd+\kappa)}\right)^{\alpha}.$$
\end{proposition}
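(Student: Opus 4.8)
The plan is to express the $k$-th factorial moment as a sum over ordered $k$-tuples of pairwise disjoint $d$-subsets of the probability that each subset is a cluster, and then to evaluate that probability by combining the isolation formula already derived with equation (\ref{anis}).

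First I would write $|I_d| = \sum_{|D| = d} \un\{D \text{ is a block of } \Ca\}$, the sum running over all $d$-element subsets $D \subseteq X$. Expanding the falling factorial gives
$$\Ed\big(|I_d|(|I_d|-1)\cdots(|I_d|-k+1)\big) = \sum \Pd(D_1, \ldots, D_k \text{ are all blocks of } \Ca),$$
where the sum is over ordered $k$-tuples $(D_1, \ldots, D_k)$ of distinct $d$-subsets. Since distinct blocks of a partition are disjoint, only pairwise disjoint tuples contribute, and by the vertex-symmetry of $K_n$ every surviving term has the same value. The number of such ordered tuples is exactly $\binom{n}{d}\binom{n-d}{d}\cdots\binom{n-kd+d}{d}$, which produces the binomial prefactor; it then remains to compute the common probability.

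Next I would factor the event $\{D_1, \ldots, D_k \text{ are all blocks}\}$ as the intersection of $\{D_1, \ldots, D_k \text{ are isolated}\}$ with the internal-connectivity events $\{\Ca$ restricted to $D_i$ equals $\{D_i\}\}$. The key structural point is that in the Poisson ensemble $\La$ the loops confined to a fixed region and the loops crossing the region boundaries form independent Poisson processes (disjoint pieces of the intensity $\alpha\nu$); since isolation of all the $D_i$ is precisely the absence of crossing loops, it is independent of the $k$ internal events. The isolation probability, with all sizes equal to $d$, is given by the formula derived above as $(m_d^{(\alpha)})^k m_{n-kd}^{(\alpha)} / m_n^{(\alpha)}$. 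For the internal factors, the loops staying inside $D_i$ form the loop soup associated with $G^{D_i}$, which by the restriction rule is the soup on $K_d$ with killing $\kappa + (n-d)$; crucially the governing parameter is $d + (\kappa + (n-d)) = n + \kappa$, so the moments and cumulants of this subgraph coincide with $m_d^{(\alpha)}$ and $c_d^{(\alpha)}$. Applying (\ref{anis}) to $K_d$ then gives $\Pd(D_i \text{ internally connected}) = c_d^{(\alpha)} / m_d^{(\alpha)}$ for each $i$.

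Multiplying, the $(m_d^{(\alpha)})^k$ from the isolation factor cancels the $k$ internal denominators, leaving the common probability equal to $(c_d^{(\alpha)})^k\, m_{n-kd}^{(\alpha)} / m_n^{(\alpha)}$. Finally, using $m_j^{(\alpha)} = (1 - j/(n+\kappa))^{-\alpha}$, the ratio $m_{n-kd}^{(\alpha)}/m_n^{(\alpha)}$ collapses to $\big(\kappa/(kd+\kappa)\big)^\alpha$, and assembling this with the tuple count yields the claimed formula. The main obstacle is the middle step: justifying that the internal-connectivity event for an isolated block is exactly $\Ca = \{D_i\}$ for a loop soup on $K_d$ with the shifted killing $\kappa' = \kappa + (n-d)$, and verifying both the Poissonian independence that decouples it from isolation and the parameter identity $d + \kappa' = n + \kappa$ that keeps the cumulants unchanged. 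Everything else is bookkeeping.
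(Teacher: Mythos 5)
Your proposal is correct and follows essentially the same route as the paper: decompose the factorial moment over ordered tuples of disjoint $d$-subsets, use the Poissonian independence of loops confined to the $D_i$'s versus loops meeting the complement to split isolation from internal connectivity, and apply formula (\ref{anis}) with the shifted killing $\kappa+(n-d)$ to identify the internal-connectivity probability as $c_d^{(\alpha)}/m_d^{(\alpha)}$. You even make explicit the parameter identity $d+(\kappa+n-d)=n+\kappa$ guaranteeing that the subgraph's moments and cumulants coincide with $m_d^{(\alpha)}$ and $c_d^{(\alpha)}$, a point the paper leaves implicit.
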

\begin{proof}
Given $D$ any set of vertices, since the sets of loops contained in $D$ and the sets of loops intersecting $X-D$ are independent, the probability for $D$ to be a connected cluster of $\mathcal{C}_{\alpha}$ given that $D$ is isolated is the probability that the cluster decomposition of $D$ defined by the loops of $\mathcal{DL}_{\alpha}$ carried by $D$ is trivial. Replacing $X$ by $D$ and $\kappa$ by $n-d+\kappa$ in formula (\ref{anis}) entails that it equals $\frac{c_{d}^{(\alpha)}}{m_{d}^{(\alpha)}}$. 
More generally, given $D_1, ...,D_k$ disjoint sets of vertices, since the sets of loops contained in each $D_i$ and the set of loops intersecting $X-\bigcup_1^kD_i$ are all independent, the probability for $D_1, ...,D_k$ to be connected given that they are isolated is $\prod_1^k\frac{c_{d_i}^{(\alpha)}}{m_{d_i}^{(\alpha)}}$.\\
Therefore,
$$\Pd(D_1, ..., D_k \text{ are clusters of }\mathcal{C}_{\alpha})
=\frac{\prod_{i=1}^{k} c^{(\alpha)}_{d_{i} }m_{n-\sum_1^k d_i}^{(\alpha)}}{m_n^{(\alpha)}}=\prod_{i=1}^{k} c^{(\alpha)}_{d_{i}}\frac {\kappa^{\alpha}}{(\sum_1^k d_i+\kappa)^{\alpha}}
.$$  It is now easy to conclude as in the case of isolated vertices.
\end{proof}

\section{Asymptotics for small clusters}

Letting $n$ increase to infinity, we get from formula (\ref{anss}) and from the expression of moments in terms of factorial moments using Stirling numbers that 
$$\Ed(|I_1|^k)\sim n^k\left(\frac{\kappa}{k+\kappa}\right)^{\alpha}.$$
Let us note that 
 $\left(\frac{\kappa}{k+\kappa}\right)^{\alpha}$ is the $k$-th moment 
  of the random variable 
$R_{\alpha}=\exp(\frac{-Z_{\alpha}}{\kappa})$ where $Z_{\alpha}$ denotes a Gamma$(\alpha,1)$-distributed
random variable. In particular, for $\alpha=1$,  $R_{1}$ has density $\kappa x^{\kappa-1}\un_{[0,1]}$.\\
Consequently, we have:
\begin{proposition}
The proportion $|I_1|/n$ of isolated vertices converges in distribution towards $R_{\alpha}$.
\end{proposition}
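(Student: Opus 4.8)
The plan is to prove this convergence by the method of moments. First I would establish convergence of all the moments of $|I_1|/n$. For each fixed integer $k\geq 1$, dividing the asymptotic relation $\Ed(|I_1|^k)\sim n^k\left(\frac{\kappa}{k+\kappa}\right)^{\alpha}$ (obtained just above from (\ref{anss})) by $n^k$ yields
$$\Ed\left(\left(\frac{|I_1|}{n}\right)^{k}\right)\longrightarrow \left(\frac{\kappa}{k+\kappa}\right)^{\alpha}=\Ed\left(R_{\alpha}^{k}\right),$$
where the last equality is precisely the moment identity for $R_{\alpha}=\exp(-Z_{\alpha}/\kappa)$ recorded above. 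Thus the $k$-th moment of $|I_1|/n$ converges to the $k$-th moment of the candidate limit $R_{\alpha}$, for every $k$.

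Next I would invoke moment-determinacy of the limit. Since $Z_{\alpha}\geq 0$, the variable $R_{\alpha}=\exp(-Z_{\alpha}/\kappa)$ takes its values in the bounded interval $(0,1]$. A probability distribution carried by a compact interval is determined by its moments (polynomials being dense in the continuous functions on $[0,1]$ by the Weierstrass theorem), so $R_{\alpha}$ is moment-determinate.

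The conclusion then follows from the standard method-of-moments argument. The variables $|I_1|/n$ also take their values in the compact set $[0,1]$, so the family $(|I_1|/n)_n$ is automatically tight; any weak subsequential limit is supported on $[0,1]$ and, by the moment convergence above, shares all its moments with $R_{\alpha}$. By moment-determinacy such a subsequential limit must coincide with $R_{\alpha}$, and since every subsequence admits a further subsequence converging to $R_{\alpha}$, the whole sequence $|I_1|/n$ converges in distribution to $R_{\alpha}$.

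The quantitative work is already contained in the established asymptotics for $\Ed(|I_1|^k)$, so I do not expect a genuine obstacle here. The only point demanding care is the passage from convergence of moments to convergence in distribution, which is why I single out the boundedness of $R_{\alpha}$ (equivalently, its moment-determinacy) as the decisive structural ingredient; were the limit unbounded, this upgrade could fail and a separate determinacy criterion such as Carleman's condition would be needed.
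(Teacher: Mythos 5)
Your proposal is correct and follows essentially the same route as the paper: the paper derives $\Ed(|I_1|^k)\sim n^k\left(\frac{\kappa}{k+\kappa}\right)^{\alpha}$ from formula (\ref{anss}), identifies the limit as the $k$-th moment of $R_{\alpha}$, and concludes by the method of moments (the word ``Consequently'' standing in for exactly the argument you give). You merely make explicit the moment-determinacy step --- justified by the boundedness of $R_{\alpha}$ on $[0,1]$ --- which the paper leaves implicit.
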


Similarly, we wish to get from proposition \ref{fksh} asymptotics for the distribution of $|I_d|$. 
\begin{lemma} \label{rnafu}
As $n$ increases to infinity, $c_d^{(\alpha)}$ is equivalent to $\alpha(d-1)!n^{-d}$.
\end{lemma}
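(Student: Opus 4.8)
The plan is to compute the asymptotic behavior of the $d$-th cumulant $c_d^{(\alpha)}$ of the random variable $Y^{(\alpha)} = \exp(Z^{(\alpha)}/(n+\kappa))$ directly from the known moments. Recall from the text that the $j$-th moment is $m_j^{(\alpha)} = (1 - j/(n+\kappa))^{-\alpha}$. The key observation is that as $n \to \infty$, each such moment is very close to $1$; writing $\varepsilon = 1/(n+\kappa)$, we have $m_j^{(\alpha)} = (1-j\varepsilon)^{-\alpha} = 1 + \alpha j \varepsilon + O(\varepsilon^2)$. So $Y^{(\alpha)}$ is concentrated near $1$, and its cumulants should be small of increasing order in $\varepsilon$.

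**First** I would set up the moment-cumulant relation. Since $Y^{(\alpha)} = e^{Z^{(\alpha)}\varepsilon}$ with $Z^{(\alpha)} \sim \mathrm{Gamma}(\alpha,1)$, the moment generating function of $\log Y^{(\alpha)}$ evaluated at integer points gives the moments: $\Ed((Y^{(\alpha)})^j) = \Ed(e^{j\varepsilon Z^{(\alpha)}}) = (1-j\varepsilon)^{-\alpha}$, which is exactly the MGF of $Z^{(\alpha)}$ at the point $j\varepsilon$. This is the cleanest route: the cumulants $c_d^{(\alpha)}$ of $Y^{(\alpha)}$ are defined through $\log \Ed(e^{sY^{(\alpha)}}) = \sum_d c_d^{(\alpha)} s^d/d!$, but it is more convenient to exploit that $Y^{(\alpha)} = 1 + (Y^{(\alpha)} - 1)$ and that $Y^{(\alpha)} - 1 = \varepsilon Z^{(\alpha)} + O(\varepsilon^2)$ is small. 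To leading order in $\varepsilon$, the cumulants of $Y^{(\alpha)}$ agree with those of $1 + \varepsilon Z^{(\alpha)}$, hence with $\varepsilon^d$ times the $d$-th cumulant of $Z^{(\alpha)}$ (shifting by the constant $1$ affects only the first cumulant).

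**Next**, I would invoke the standard fact that the $d$-th cumulant of a $\mathrm{Gamma}(\alpha,1)$ variable is $\alpha(d-1)!$. This is immediate from $\log \Ed(e^{sZ^{(\alpha)}}) = -\alpha\log(1-s) = \alpha\sum_{d\geq 1} s^d/d$, so the cumulant of order $d$ is $\alpha (d-1)!$. Therefore the leading term of $c_d^{(\alpha)}$ is $\alpha(d-1)!\,\varepsilon^d = \alpha(d-1)!(n+\kappa)^{-d}$, and since $(n+\kappa)^{-d} \sim n^{-d}$, we obtain $c_d^{(\alpha)} \sim \alpha(d-1)!\, n^{-d}$.

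**The main obstacle** is justifying rigorously that the higher-order corrections in the expansion $Y^{(\alpha)} = 1 + \varepsilon Z^{(\alpha)} + \tfrac{1}{2}\varepsilon^2 (Z^{(\alpha)})^2 + \cdots$ contribute only at order $\varepsilon^{d+1}$ or smaller to the $d$-th cumulant, so that they do not disturb the leading term. The careful way to handle this is to expand each moment $m_j^{(\alpha)} = (1-j\varepsilon)^{-\alpha}$ as a power series in $\varepsilon$ and track the order in $\varepsilon$ of the combination of moments that produces $c_d^{(\alpha)}$ through the moment-cumulant formula. One must verify that the lowest-order contribution to the $d$-th cumulant is precisely $\varepsilon^d$ and that its coefficient equals the $d$-th cumulant of $Z^{(\alpha)}$; the cancellation of all terms of order lower than $\varepsilon^d$ is exactly the statement that constants and lower-degree polynomial shifts do not affect cumulants of order $d \geq 2$, but confirming this cleanly through the explicit moment formula requires a short bookkeeping argument rather than an appeal to the abstract cumulant property.
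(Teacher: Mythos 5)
Your argument is correct in substance, but it takes a genuinely different route from the paper. You treat the lemma as a purely analytic statement about the cumulants of $Y^{(\alpha)}=\exp(Z^{(\alpha)}/(n+\kappa))$: shift invariance of cumulants of order $d\geq 2$, homogeneity under scaling, and the fact that the Gamma$(\alpha,1)$ cumulants equal $\alpha(d-1)!$ together give the asymptotics, once the error terms are controlled. The paper instead argues probabilistically: by formula (\ref{anis}) applied to an isolated $d$-set $D$ (replacing $X$ by $D$ and $\kappa$ by $n-d+\kappa$, which leaves $n+\kappa$ unchanged), the ratio $c_d^{(\alpha)}/m_d^{(\alpha)}$ is the probability $p_C$ that $D$ is connected; this probability is then squeezed between $p_-$, the probability that exactly one of the $(d-1)!$ $d$-gons covering $D$ belongs to $\mathcal{DL}_{\alpha}$ (which is $\sim\alpha(d-1)!\,n^{-d}$), and $p_-+\sigma+p_>$, where $\sigma$ (intermediate configurations) and $p_>$ (large total length) are $o(n^{-d})$. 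Your route is shorter and self-contained, requiring no loop-measure estimates. The paper's route buys a structural byproduct that is used later in the text: it shows that an isolated $d$-set is connected essentially only via a single covering $d$-gon, which is precisely what justifies the subsequent proposition identifying $|I_d|$ asymptotically with the number of isolated $d$-gons. Your proof establishes the lemma but would not by itself support that later statement.

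Two points should be repaired or completed. First, your parenthetical definition of the cumulants via $\log \Ed(e^{sY^{(\alpha)}})$ is not available here: $Y^{(\alpha)}$ has a power-law tail (for $\alpha=1$ its density is $(n+\kappa)y^{-(n+\kappa+1)}\un_{[1,\infty)}$), so this expectation is infinite for every $s>0$. The cumulants must be understood, as in the paper's references, through the formal moment--cumulant polynomials; this is in fact all your argument uses. Second, the ``bookkeeping'' you defer can be closed cleanly without tracking cancellations. Write $\varepsilon=1/(n+\kappa)$ and, for $d\geq2$, use shift invariance and homogeneity to get $c_d^{(\alpha)}=c_d\bigl(Y^{(\alpha)}-1\bigr)=\varepsilon^d c_d(W_\varepsilon)$ with $W_\varepsilon=(e^{\varepsilon Z^{(\alpha)}}-1)/\varepsilon$. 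Each moment $\Ed(W_\varepsilon^k)$ is the $k$-th finite difference $\varepsilon^{-k}\sum_{j=0}^k\binom{k}{j}(-1)^{k-j}(1-j\varepsilon)^{-\alpha}$, hence equals $f^{(k)}(\xi)$ for $f(t)=(1-t)^{-\alpha}$ and some $\xi\in(0,k\varepsilon)$, and therefore converges to $f^{(k)}(0)=\Ed\bigl((Z^{(\alpha)})^k\bigr)$ as $\varepsilon\to0$ (alternatively, dominated convergence with $W_\varepsilon\leq Z^{(\alpha)}e^{\varepsilon Z^{(\alpha)}}$). Since $c_d$ is a fixed polynomial in the first $d$ moments, $c_d(W_\varepsilon)\to c_d(Z^{(\alpha)})=\alpha(d-1)!$, giving $c_d^{(\alpha)}\sim\alpha(d-1)!\,(n+\kappa)^{-d}\sim\alpha(d-1)!\,n^{-d}$. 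Finally, note that your argument, like the paper's, requires $d\geq2$: for $d=1$ one has $c_1^{(\alpha)}=m_1^{(\alpha)}\to1$, so the lemma is to be read for clusters of size at least two.
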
\label{mzk}
\begin{proof}
This result is suggested by explicit calculations of $c_d^{(\alpha)}$ for small $d$, but will be proved by considering the probability $p_C$ for an isolated set of size $d$ to be connected (as it is clear that $\lim_{n\rightarrow\infty}m_{d}^{(\alpha)}=1$). \\
Firstly, we note that $p_C$ is larger than the probability $p_-$ that exactly one of the  $(d-1)!$ $d$-gons covering $D$ belongs to $\mathcal{DL}_{\alpha}$. As $P_{x,y}=\frac{1}{n+\kappa-1}$, the $\nu$-  measure of a $d$-gon equals $u_d=(n+\kappa-1)^{-d}$ and $p_ - =(d-1)!\alpha u_de^{-d \alpha u_d}$ It is equivalent to $\alpha(d-1)!n^{-d}$ as $n\rightarrow \infty$.\\
To get an upper bound, let us first assume that $\alpha \geq 1$. $p_C$ is smaller than $p_-$ + the probability $p_>$ that the total size $M$ of the loops of $\mathcal{DL}_{\alpha}$ included in $D$ is larger than $(2d+1)\alpha$ +
 the sum, on all loop configurations included in $D$ with size at least $d+1$ and at most $(2d+1)\alpha$, of their probabilities to be included in $\mathcal{DL}_{\alpha}$ .  As the number of configurations with size at least $d+1$  and at most $(2d+1)\alpha$ is finite (and independent of $n$), this sum $\sigma$ can be bounded by $C(n+\kappa-1)^{-d-1}$, $C$ being some constant. Moreover, the probability $p_>$ can be bounded above by $((n+\kappa-1)/d)^{-(2d+1)\alpha}\Ed(((n+\kappa-1)/d)^M)$.
Applying formula 4.18 in \cite{book} or 6-4 in \cite{stfl} to $G^D$ and to the Green function $\tilde G^D$ defined on $D$ by unit conductances and killing function equal to $\kappa$, we get that $\Ed(((n+\kappa-1)/(d+\kappa-1))^{M})= \left(\frac{\det(\tilde G^D)}{\det(G^D)}\right)^{\alpha}\leq Cn^{\alpha d}$ for some constant $C$. Therefore, $p_>\leq C (n/d)^{-(d+1)\alpha}$.\\

If $\alpha \leq 1$, we note that the probability $p_C$ is smaller than $p_-$ + the probability $p_>$ that the total size $M$ of the loops of $\mathcal{DL}_{\alpha}$ included in $D$ is larger than $(2d+1)$ +
 the sum, on all loop configurations included in $D$ with size at least $d+1$ and at most $(2d+1)$, of their probabilities to be included in $\mathcal{DL}_{\alpha}$.  As the number of configurations with size at least $d+1$  and at most $(2d+1)$ is finite (and independent of $n$), this sum $\sigma$ can be bounded by $C(n+\kappa-1)^{-d-1}$, C being some constant. Note finally that the probability $p_>$ can be bounded above by the one obtained in the case $\alpha=1$.
 \end{proof}

Consequently, from proposition \ref{fksh}, as $n\uparrow \infty$,
$$\Ed(|I_d|(|I_d|-1)...(|I_d-k+1))\rightarrow \alpha^kd^{-k}\left(\frac{\kappa}{kd+\kappa}\right)^{\alpha}.$$
Let us note as before that 
 $\left(\frac{\kappa}{kd+\kappa}\right)^{\alpha}$ is the $k$-th moment 
  of the random variable 
$H_{\alpha}=\exp(\frac{-Z_{\alpha}}{\kappa/d})$ where $Z_{\alpha}$ denotes a Gamma$(\alpha,1)$-distributed
random variable. In particular, for $\alpha=1$,  $H_{1}$ has density $\un_{[0,1]}(x)(\kappa/d)x^{\kappa/d-1}$.\\
Note also that the proof of lemma \ref{mzk} shows that the probability for an isolated set of size $d$ to be connected is asymptotically equivalent to the probability that this set contains no loop except exactly one $d$-gon covering it. Indeed, in both cases $\alpha \leq 1$ and $\alpha \geq 1$, this last probability is obviously smaller than $p_-$ and larger than $p_--\sigma-p_>.$

Consequently:
 \begin{proposition}
The number $|I_d|$ of  $\mathcal{C}_{\alpha}$-clusters of size $d$ is asymptotically equal to the number of isolated $d$-gons. It converges in distribution towards 
a mixture of Poisson distributions:
$$\Pd(|I_d|=k)=\mathbb E( \frac{H_{\alpha}^k}{k!}e^{-H_{\alpha}}).$$
For $\alpha=1$,
$$\Pd(|I_d|=k)-\int_0^1 \frac{x^k}{k!}e^{-x}\frac{\kappa}{d} x ^{\frac{\kappa}{d}-1}dx.$$

\end{proposition}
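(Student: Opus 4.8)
The plan is to argue by the method of factorial moments; most of the analytic work is already done in the computations preceding the statement. Combining Proposition~\ref{fksh} with Lemma~\ref{rnafu}, the display just above the proposition gives, for each fixed $k\ge1$,
$$\lim_{n\to\infty}\Ed\big(|I_d|(|I_d|-1)\cdots(|I_d|-k+1)\big)=\mu_k,\qquad \mu_k:=\alpha^k d^{-k}\left(\frac{\kappa}{kd+\kappa}\right)^{\alpha}.$$
Hence only two things remain: to recognize $(\mu_k)_{k\ge1}$ as the factorial-moment sequence of a mixed Poisson law, and to justify that convergence of factorial moments forces convergence in distribution.

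For the first point I would use the elementary fact that if, conditionally on a nonnegative variable $W$, the count $N$ is $\mathrm{Poisson}(W)$, then $\Ed\big(N(N-1)\cdots(N-k+1)\big)=\Ed(W^k)$ and $\Pd(N=j)=\Ed\big(W^j e^{-W}/j!\big)$. With the moment identity $\Ed(H_{\alpha}^k)=(\kappa/(kd+\kappa))^{\alpha}$ recorded in the text, one sees that $\mu_k=\Ed\big((\tfrac{\alpha}{d}H_{\alpha})^k\big)$, so the mixing variable is $W=\tfrac{\alpha}{d}H_{\alpha}$; substituting the explicit density coming from the law of $Z_{\alpha}$ then yields the closed form at $\alpha=1$.

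For the second point, convergence of all factorial moments of $|I_d|$ to $(\mu_k)$ upgrades to convergence in distribution provided the candidate limit is determined by its moments. This determinacy is immediate here because $W$ is \emph{bounded} (as $H_{\alpha}\in(0,1]$): the factorial moments decay geometrically, $\mu_k\le(\alpha/d)^k$, the factorial-moment generating series $\sum_k \mu_k s^k/k!$ has infinite radius of convergence, and the mixed Poisson law is sub-Poissonian, hence uniquely determined. I would then apply the standard moment-convergence theorem for $\NN$-valued random variables, or equivalently pass to the limit in the inclusion--exclusion recovery formula $\Pd(|I_d|=j)=\sum_{k\ge j}\frac{(-1)^{k-j}}{j!(k-j)!}\,\Ed\big[(|I_d|)_k\big]$.

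The main obstacle is precisely this last passage to the limit: termwise convergence of each factorial moment does not by itself license interchanging $\lim_{n\to\infty}$ with the infinite alternating sum over $k$. The remedy is a dominated-convergence argument using a bound on $\Ed[(|I_d|)_k]$ that is uniform in $n$ for $n$ large; the geometric control inherited from $\mu_k\le(\alpha/d)^k$ supplies a summable majorant of the form $(\alpha/d)^k/(k-j)!$, after which the termwise limits may be inserted and resummed. The remaining identifications --- the mixing density and the explicit $\alpha=1$ integral --- are the routine computations already essentially carried out before the statement.
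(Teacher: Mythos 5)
Your overall route is the same as the paper's: the paper gives no separate proof of this proposition, but derives it (``Consequently'') from exactly the two ingredients you use, namely the limit of factorial moments $\Ed\bigl[|I_d|(|I_d|-1)\cdots(|I_d|-k+1)\bigr]\to\mu_k=\alpha^k d^{-k}\bigl(\kappa/(kd+\kappa)\bigr)^{\alpha}$ obtained from Proposition~\ref{fksh} and Lemma~\ref{rnafu}, followed by an implicit method-of-moments step. Your extra care on that last step is legitimate: pointwise convergence of each factorial moment does not by itself justify exchanging $\lim_n$ with the alternating inclusion--exclusion series. Note, however, that the uniform-in-$n$ majorant cannot simply be ``inherited'' from the inequality $\mu_k\le(\alpha/d)^k$ satisfied by the \emph{limits}; it should be extracted from the exact formula of Proposition~\ref{fksh}, which gives $\Ed\bigl[(|I_d|)_k\bigr]\le\bigl(n^d c_d^{(\alpha)}/d!\bigr)^k$, and hence, by Lemma~\ref{rnafu} (with $d$ fixed), a bound of the form $(2\alpha/d)^k$ valid for all $k$ simultaneously once $n$ is large; alternatively, the Bonferroni truncation inequalities avoid the need for any uniform bound. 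With either repair, your argument is sound.

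Two substantive points remain. First, your identification of the mixing variable as $W=\frac{\alpha}{d}H_{\alpha}$ is the internally consistent one --- it is the unique bounded variable whose moments are the $\mu_k$ --- but it does \emph{not} ``yield the closed form'' displayed in the statement, contrary to what you assert: the stated formula $\Pd(|I_d|=k)=\Ed\bigl(H_{\alpha}^k e^{-H_{\alpha}}/k!\bigr)$ and the $\alpha=1$ integral correspond to mixing variable $H_{\alpha}$ itself, whose factorial moments would be $\bigl(\kappa/(kd+\kappa)\bigr)^{\alpha}$, missing the factor $\alpha^k d^{-k}$ that both you and the paper compute. The proposition as printed is therefore inconsistent with the display immediately preceding it in the paper (evidently a slip: $H_{\alpha}$ should be replaced by $\frac{\alpha}{d}H_{\alpha}$, with the $\alpha=1$ integral adjusted accordingly, e.g.\ to $\int_0^1 \frac{(x/d)^k}{k!}e^{-x/d}\frac{\kappa}{d}x^{\kappa/d-1}\,dx$). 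You proved the corrected statement; you should have flagged the discrepancy rather than claiming agreement. Second, you do not address the first assertion of the proposition, that $|I_d|$ is asymptotically equal to the number of isolated $d$-gons. The paper obtains this from the observation stated just before the proposition: the proof of Lemma~\ref{rnafu} shows that the probability for an isolated $d$-set to be connected is asymptotically the probability that it carries exactly one covering $d$-gon and no other loop (that probability being squeezed between $p_--\sigma-p_>$ and $p_-$). This part of the statement is untouched by your moment computation.
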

\begin{remark} For the Erdös-Renyi graph in which edges are open independently with probability $c/n$, we have a very different situation. Using independence, it is easy to check that for any $d\geq 1$ , the $k$-th factorial moment of the number of isolated trees of size $d$ equals $$ {n \choose d}{n-d \choose d}...{n-kd+d \choose d}(c/n)^{k(d-1)}(1-c/n)^{kd(n-kd)+k(k-1)d^2/2+d(d-1)/2-d+1}$$. For each isolated cluster of size $d$, there are $d^{d-2}$ trees spanning it, and we can easily see that, as $n\uparrow \infty$, they provide the leading term in its probability of connectedness. From that, we deduce that the $k$-th moment of the number of isolated clusters of size $d$ is equivalent to $
[d^{d-2}n^d/d!]^k[c/n]^{k(d-1)}e^{-ckd}=[nd^{d-2}c^{(d-1)}e^{-cd}/d!]^k$ so that this number is equivalent to $ \frac{d^{d-2}c^{(d-1)}e^{-cd}}{d!}n$, as shown in \cite{erd}. A much more complete result, including a CLT for fluctuations, was given in \cite{Pitt}.   
  \end{remark}

\begin{remark}   
For $\kappa=1$, the expansion of $\frac{c_{n}^{(1)}}{m_{n}^{(1)}}$ in powers of $z=\frac{1}{n}$ starts with $(d-1)!z^d(1+ d(d-2)z+...$. The second term can be associated with probabilities loop configurations covering $d$ vertices with $d+1$ edges. Moreover, the expansions in powers of $x=\frac{1}{n+1}$ appear to be given by  the triangle A087903 in OEIS. This has still to be understood.
   \end{remark}
\section{Large clusters}
There exists also large clusters. We have the following 
\begin{proposition}

For any positive $\epsilon<1$, the probability that there exists clusters of size larger than $
n^{1-\epsilon}$ converges to $1$ as $n\uparrow\infty$.
\end{proposition}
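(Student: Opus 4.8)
The plan is to show that the complementary event $A_L=\{\text{every }\Ca\text{-cluster has size at most }L\}$, with $L=\lfloor n^{1-\epsilon}\rfloor$, has probability tending to $0$. The starting point is the bookkeeping identity that holds on $A_L$: since no cluster exceeds size $L$, every vertex lies in a cluster of size in $\{1,\dots,L\}$, so $n=\sum_{d=1}^{L}d\,|I_d|$ and hence $\sum_{d=2}^{L}d\,|I_d|=n-|I_1|$. I would couple this with the already established fact that the proportion $|I_1|/n$ of isolated vertices converges in distribution to $R_{\alpha}=\exp(-Z_{\alpha}/\kappa)$, which is almost surely strictly less than $1$. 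Thus a positive proportion of vertices is typically non-isolated, and on $A_L$ all of them must be packed into clusters of intermediate size $2\le d\le L$; the point is to show there is not enough room.

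To make this precise, write $V=\sum_{d=2}^{L}d\,|I_d|$ for the number of vertices in clusters of size between $2$ and $L$. Fix a small $\delta$ (a continuity point of the law of $R_\alpha$). Splitting on whether $|I_1|\ge(1-\delta)n$ and using that on $A_L$ one has $V=n-|I_1|$, I get $\Pd(A_L)\le \Pd(|I_1|\ge(1-\delta)n)+\Pd(V>\delta n)$. By Markov's inequality $\Pd(V>\delta n)\le \Ed(V)/(\delta n)$, while by the convergence of $|I_1|/n$ the first term tends to $\Pd(R_\alpha\ge 1-\delta)$, which decreases to $\Pd(R_\alpha\ge 1)=0$ as $\delta\downarrow 0$ since $R_\alpha<1$ almost surely. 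Hence everything reduces to the single estimate $\Ed(V)=\sum_{d=2}^{L}d\,\Ed(|I_d|)=o(n)$.

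For the main estimate I would use Proposition \ref{fksh} in the case $k=1$, namely $\Ed(|I_d|)=\binom{n}{d}c_d^{(\alpha)}\bigl(\kappa/(d+\kappa)\bigr)^{\alpha}$, and aim for the uniform bound $\Ed(|I_d|)\le C/d$ for all $2\le d\le L$, with $C=C(\alpha,\kappa)$ independent of $d$ and $n$. Granting this, $\Ed(V)\le \sum_{d=2}^{L}d\,(C/d)=C(L-1)\le Cn^{1-\epsilon}=o(n)$, so that $\Pd(V>\delta n)\le C/(\delta n^{\epsilon})\to 0$, which finishes the argument. Since $\bigl(\kappa/(d+\kappa)\bigr)^{\alpha}\le 1$ and $m_d^{(\alpha)}=(1-d/(n+\kappa))^{-\alpha}$ stays bounded for $d\le n^{1-\epsilon}$, the bound $\Ed(|I_d|)\le C/d$ is equivalent to a uniform upper bound $p_C(d)=c_d^{(\alpha)}/m_d^{(\alpha)}\le C'(d-1)!\,n^{-d}$ on the probability that an isolated $d$-set is connected. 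The leading contribution to $p_C(d)$ comes, as in Lemma \ref{rnafu}, from the $(d-1)!$ simple $d$-gons and is of size $(d-1)!\,\alpha u_d$ with $u_d\le C_0 n^{-d}$ uniformly; since $\binom{n}{d}(d-1)!\,n^{-d}=\tfrac1d\,n(n-1)\cdots(n-d+1)\,n^{-d}\le 1/d$, this term already produces the desired order $1/d$.

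The hard part is the uniformity in $d$. Lemma \ref{rnafu} controls $p_C(d)$ for fixed $d$ by peeling off the single-$d$-gon term and bounding the remaining connected configurations (those using at least $d+1$ edges) together with a tail bound on the total occupied length; but there the implicit constants depend on $d$, whereas here I need these bounds as $d$ grows with $n$ up to $n^{1-\epsilon}$. The obstacle is that the number of connected spanning loop-configurations on $d$ vertices with more than $d$ edges grows fast with $d$, and one must show that, once weighted by their $\nu$-measure and multiplied by $\binom{n}{d}$, they remain of smaller order than the single-$d$-gon term $C/d$, uniformly in the whole range $2\le d\le n^{1-\epsilon}$. I expect this combinatorial bookkeeping, essentially a uniform-in-$d$ strengthening of the estimates in the proof of Lemma \ref{rnafu}, to be the principal difficulty; the remaining ingredients, the identity on $A_L$, the Markov inequality, and the passage $\delta\downarrow 0$ using $R_\alpha<1$ almost surely, are routine.
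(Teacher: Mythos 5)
Your reduction is sound as far as it goes: on the complementary event every vertex lies in a cluster of size at most $L=\lfloor n^{1-\epsilon}\rfloor$, so $n=\sum_{d\le L}d\,|I_d|$, and combining $|I_1|/n\Rightarrow R_\alpha$ (with $R_\alpha<1$ almost surely) with Markov's inequality correctly reduces everything to showing $\Ed\bigl(\sum_{d=2}^{L}d\,\Ed(|I_d|)\bigr)=o(n)$. But the proof stops exactly where the difficulty begins: the uniform bound $\Ed(|I_d|)\le C/d$, equivalently $c_d^{(\alpha)}\le C'(d-1)!\,n^{-d}$, for all $2\le d\le n^{1-\epsilon}$, is named as the needed ingredient and then explicitly left unproved. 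This is not a routine strengthening of Lemma \ref{rnafu}. In that lemma's proof the error term $\sigma$ is controlled by the number of connected loop configurations on $d$ vertices with total size between $d+1$ and $(2d+1)\alpha$, a quantity that is ``finite and independent of $n$'' only because $d$ is fixed; it grows at least factorially in $d$. Already the single loops of length $d+1$ covering $D$ number about $d\cdot d!/2$, so their total $\nu$-mass relative to the $(d-1)!$ $d$-gons is of order $d^2/n$, and configurations with $j$ extra edges contribute on the order of $(Cd^2/n)^j$; term-by-term domination of the kind used in Lemma \ref{rnafu} therefore survives only for $d=o(\sqrt n)$, i.e.\ at best for $\epsilon>1/2$. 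For the full range $\epsilon\in(0,1)$ your key estimate requires a genuinely new combinatorial or analytic argument which neither your proposal nor the paper supplies. Since the whole proof funnels through this single estimate, this is a genuine gap, not a deferred routine verification.

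For comparison, the paper's proof sidesteps cluster counting entirely and exhibits a large cluster directly as the vertex set of a single long loop. The total loop mass is $|\nu_n|\sim\log(n/\kappa)$, and by Chang's theorem (cited in the paper) the law of $\log(|l|)/\log(n)$ under $\bar\nu_n$ is asymptotically uniform on $[0,1]$; hence the set $H_\epsilon$ of loops of length at least $n^{1-\epsilon}$ has $\nu_n$-mass at least $\epsilon\log(n)/2$ for $n$ large, so the Poisson count of such loops in $\mathcal{DL}_\alpha$ is nonzero with probability at least $1-n^{-\alpha\epsilon/2}$. A Markov-inequality bound on the number of repeated vertices then shows that a loop of length at least $n^{1-\epsilon}$ traverses at least $(1-\epsilon)n^{1-\epsilon}$ distinct vertices with conditional probability at least $1-n^{-\epsilon}/\epsilon$. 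One long loop forces one large cluster, and no estimate uniform in $d$ is ever needed. If you wish to salvage your route, what is missing is precisely uniform-in-$d$ asymptotics for the cumulants $c_d^{(\alpha)}$ with $d$ growing polynomially in $n$; given that the paper states the existence of clusters of size $cn$ as an open problem, obtaining such control is likely to be substantially harder than the proposition itself.
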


\begin{proof}

The total mass $|\nu_n|$ of the loop measure $\nu_n$ on $K_n$ is $n\sum_2^{\infty}\frac{1}{k}\frac{(n-1)^{k-1}}{(n-1+\kappa)^k}=\frac{n}{n-1}(-\log(1-\frac{n-1}{n-1+\kappa})-\frac{n-1}{n-1+\kappa})\sim \log(n/\kappa)$ as $n\uparrow \infty$. 
In the same way, it is easily shown in  \cite{Chan}, theorem 1-2, that under the normalized loop measure $\bar\nu_n(dl)$, the distribution of $u(l)=\log(|l|)/\log(n)$ converges towards the uniform distribution on $[0,1]$. 

For any $0<\epsilon <1$, let $H_{\epsilon}$ be the set of loops of length larger than $n^{1-\epsilon}$. $|H_{\epsilon}|$ follows a Poisson distribution of parameter $\alpha\nu_n(H_{\epsilon})>\alpha \log(n)\epsilon/2$ for $n$ large enough. Consequently, with probability larger than $1-n^{-\alpha \epsilon/2}$, there exists at least one loop of length larger than $n^{1-\epsilon}$.

Moreover, demoting by $\{l\}$ the set of vertices traversed by the loop $l$, we have the following identity: $$\bar\nu_(|\{l\}|||l|=x)=  n(1-(1-1/n)^x)\leq x.$$ By Markov inequality: $$\bar\nu_(x-|\{l\}|>m\,|\, |l|=x)\leq (x-n(1-(1-1/n)^x))/m.$$
In particular, taking $x=n^{1-\epsilon}$ and $m=\epsilon n^{1-\epsilon}$, we get: $$\bar\nu_(|\{l\}|<(1-\epsilon)n^{1-\epsilon}\,|\,|l|= [n^{1-\epsilon}])\leq n^{\-\epsilon}/\epsilon.$$ Therefore: $$\bar\nu_(|\{l\}|<(1-\epsilon)n^{1-\epsilon}\,|\,|l|\geq [n^{1-\epsilon}])\leq n^{\-\epsilon}/\epsilon$$ and with probability larger than $(1-n^{-\alpha \epsilon/2})(1-n^{\-\epsilon}/\epsilon)$, there exists at least  one set of vertices traversed by a single loop of size larger than $(1-\epsilon)n^{1-\epsilon}$.\\
As the result holds for sets of vertices traversed by a single loop, it holds obviously for loop clusters.

\end{proof} 

As far as we know, the existence of clusters of size $cn$, for $0<c<1$ is an open question.

    D\'epartement de Math\'ematique. Universit\'e Paris-Saclay.  Orsay\\

\bigskip

\end{document}